\newtheorem{theorem}{Theorem}[section]
\newtheorem{lemma}[theorem]{Lemma}
\newtheorem{corollary}[theorem]{Corollary}
\theoremstyle{definition}
\newtheorem{remark}[theorem]{Remark}
\numberwithin{equation}{section}
\begin{document}

\title{On the  mod $p^7$ determination of ${2p-1\choose p-1}$}

\author{Romeo Me\v strovi\' c}

\address{Maritime Faculty, University of Montenegro, Dobrota 36,
 85330 Kotor, Montenegro} \email{romeo@ac.me}

{\renewcommand{\thefootnote}{}\footnote{2010 {\it Mathematics Subject 
Classification.} Primary 11B75; Secondary 11A07, 11B65, 11B68, 05A10.

{\it Keywords and phrases.} Congruence,  prime power,  
Wolstenholme's theorem, Wolstenholme prime, Bernoulli numbers.}
\setcounter{footnote}{0}}

\maketitle

\begin{abstract}
In this paper we prove 
that for any prime $p\ge 11$  holds
  $$
{2p-1\choose p-1}\equiv 1 -2p \sum_{k=1}^{p-1}\frac{1}{k}
+4p^2\sum_{1\le i<j\le p-1}\frac{1}{ij}\pmod{p^7}. 
 $$
This  is a generalization of the famous Wolstenholme's theorem
which asserts that ${2p-1\choose p-1} \equiv 1 \,\,(\bmod\,\,p^3)$
for all primes $p\ge 5$.
Our proof is elementary  and it does not use
a standard technique involving the 
classic formula for the  power sums in terms of the Bernoulli numbers.
Notice that the above congruence reduced modulo $p^6$,
$p^5$ and $p^4$ yields related congruences 
obtained by  R. Tauraso, J. Zhao and J.W.L. Glaisher, respectively.
\end{abstract}

\section{Introduction and Statement  of  Results}

{\it Wolstenholme's theorem} (e.g. see  \cite{W}, \cite{Gr})
asserts that if $p$ is a prime greater than 3, then 
the binomial coefficient
${2p-1\choose p-1}$ satisfies the congruence
 \begin{equation}\label{cong1.1}
{2p-1\choose p-1} \equiv 1 \pmod{p^3}
   \end{equation}
for any prime $p\ge 5$.
 It is well known  (e.g. see \cite[p. 89]{HW}) that
this theorem is equivalent to the assertion 
 that the numerator of the fraction $1+\frac{1}{2}+ 
\frac{1}{3}+\cdots +\frac{1}{p-1}$ is divisible by $p^2$ for
any prime $p\ge 5$.

Further, by a special case of 
Glaisher's congruence (\cite[p. 21]{Gl1}, \cite[p. 323]{Gl2};   
also cf. \cite[Theorem 2]{M}), for any prime $p\ge 5$ we have 
    \begin{equation}\label{cong1.2}
{2p-1\choose p-1}\equiv 1-2p \sum_{k=1}^{p-1}\frac{1}{k}\equiv
1 -\frac{2p^3}{3}B_{p-3}\pmod{p^4},
      \end{equation}
where $B_k$ is the $k$th Bernoulli number. 
A. Granville \cite{Gr} established broader generalizations of 
Wolstenholme's theorem. 
More recently, C. Helou and G. Terjanian \cite{HT}  established many  
Wolstenholme's type congruences modulo $p^k$ with a prime $p$
and $k\in\{4,5,6\}$. 
One of their main results 
\cite[Proposition 2, pp. 488-489]{HT} 
is a congruence of the form ${np\choose mp}\equiv f(n,m,p) {n\choose m}
\,\,(\bmod\,\, p^6)$,
where $p\ge 3$ is a prime number, $m,n,\in \Bbb N$ with 
$0\le m\le n$, and $f$ is the function on $m,n$ and $p$ involving 
Bernoulli numbers $B_k$ $(k\in\Bbb N)$. 
In particular, for $p\ge 5$, $m=1$ and $n=2$,
using the fact that $\frac{1}{2}{2p\choose p}={2p-1\choose p-1}$,
this congruence  yields \cite[Corollary 1]{HT}
   \begin{equation}\label{congr1.3}  
{2p-1\choose p-1} \equiv 1-p^3B_{p^3-p^2-2}
+\frac{p^5}{3}B_{p-3}-\frac{6p^5}{5}B_{p-5}\pmod{p^6}.
      \end{equation}

Recently, R. Tauraso \cite[Theorem 2.4]{T} proved that for any prime
$p>5$
   \begin{equation*}  
{2p-1\choose p-1}\equiv 1 +2p\sum_{k=1}^{p-1}\frac{1}{k}
+\frac{2p^3}{3}\sum_{k=1}^{p-1}\frac{1}{k^3}\pmod{p^6}. 
    \end{equation*} 

In this paper we improve the above congruence as follows.

\begin{theorem}
 Let $p\ge 11$  be a prime. Then
   \begin{equation}\label{cong1.4}
{2p-1\choose p-1}\equiv 1 -2p\sum_{k=1}^{p-1}\frac{1}{k}
+4p^2\sum_{1\le i<j\le p-1}\frac{1}{ij}\pmod{p^7}. 
     \end{equation} 
\end{theorem}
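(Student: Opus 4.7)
The plan is to exploit three exact factorizations of $\binom{2p-1}{p-1}$. Writing $S_j$ for the $j$-th elementary symmetric polynomial in $1/1,1/2,\ldots,1/(p-1)$, so that $S_1=\sum_k 1/k$ and $S_2=\sum_{i<j}1/(ij)$ are the sums in the theorem, I would first establish
$$\prod_{k=1}^{p-1}\!\Bigl(1+\tfrac{p}{k}\Bigr)=\binom{2p-1}{p-1},\quad \prod_{k=1}^{p-1}\!\Bigl(1-\tfrac{2p}{k}\Bigr)=\binom{2p-1}{p-1},\quad \prod_{k=1}^{p-1}\!\Bigl(1-\tfrac{p}{k}\Bigr)=1.$$
The first is the standard factorization; the second follows from the substitution $k\mapsto 2p-k$, turning $\prod(k-2p)$ into $(-1)^{p-1}(2p-1)!/p!=(2p-1)!/p!$; the third is identical in spirit. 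Expanding each product yields the exact identities $\sum p^jS_j=\binom{2p-1}{p-1}$, $\sum(-2p)^jS_j=\binom{2p-1}{p-1}$, and $\sum(-p)^jS_j=1$. Truncating the middle expansion modulo $p^7$,
$$\binom{2p-1}{p-1}\equiv 1-2pS_1+4p^2S_2-8p^3S_3+16p^4S_4-32p^5S_5+64p^6S_6\pmod{p^7},$$
so the theorem reduces to showing that the tail $-8p^3S_3+16p^4S_4-32p^5S_5+64p^6S_6$ vanishes modulo $p^7$.

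Subtracting the second expansion from the first, dividing by $3$, and then subtracting the third (rewritten as $\sum_{j\ge 1}(-1)^{j+1}p^jS_j=0$) yields the exact arithmetic identity
$$p^3S_3-2p^4S_4+5p^5S_5-10p^6S_6+21p^7S_7-\cdots=0,$$
whose tail lies in $p^7\mathbb{Z}_{(p)}$. Using this to eliminate $p^3S_3-2p^4S_4$ from the four trailing terms collapses them modulo $p^7$ to $8p^5S_5-16p^6S_6$. It therefore suffices to prove the two valuation bounds $v_p(S_5)\ge 2$ and $v_p(S_6)\ge 1$, which make $p^5S_5$ and $p^6S_6$ individually lie in $p^7\mathbb{Z}_{(p)}$. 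A short induction with Newton's identities reduces these to the congruences $v_p(H_k)\ge 1$ for $1\le k\le p-2$ (from Fermat's little theorem) together with the refined Wolstenholme-type relations $H_1,\,H_3,\,H_5\equiv 0\pmod{p^2}$, where $H_k=\sum_{i=1}^{p-1}1/i^k$.

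Of these, $H_1\equiv 0\pmod{p^2}$ is classical Wolstenholme; $H_3$ and $H_5$ are handled elementarily by pairing $k$ with $p-k$. Specifically, a brief Taylor expansion of $1/(p-k)^5$ around $-1/k$ gives $H_5\equiv-5pT_6\pmod{p^2}$ with $T_j=\sum_{k=1}^{(p-1)/2}1/k^j$; a second pairing yields $T_6\equiv H_6/2\pmod{p}$, and the relation $H_6\equiv 0\pmod{p}$ (valid when $p-1>6$) closes the chain. At $p=7$, Fermat forces $H_6=H_{p-1}\equiv-1\pmod{p}$, which is precisely why the theorem requires $p\ge 11$. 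The main technical obstacle is therefore this elementary, Bernoulli-number-free proof of $H_5\equiv 0\pmod{p^2}$; once it and the parallel easier result $H_3\equiv 0\pmod{p^2}$ are established, the remainder of the argument is systematic algebraic bookkeeping among the three factorizations.
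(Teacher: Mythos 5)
Your proposal is correct, and its key mechanism is genuinely different from the paper's. Both arguments start from the expansion $\binom{2p-1}{p-1}=\prod_{k=1}^{p-1}(1+p/k)=\sum_j p^jS_j$ in the elementary symmetric functions $S_j$ of $1,1/2,\ldots,1/(p-1)$, and both ultimately rest on the same input: the power-sum congruences $\sum 1/k,\ \sum 1/k^3,\ \sum 1/k^5\equiv 0\pmod{p^2}$ and $\sum 1/k^2,\ \sum 1/k^4,\ \sum 1/k^6\equiv 0\pmod{p}$ (the paper's Lemma 2.1, which it cites; you reprove the needed cases by the $k\leftrightarrow p-k$ pairing, and your Taylor-expansion computation $\sum 1/k^5\equiv -5pT_6\pmod{p^2}$ with $T_6\equiv\frac12\sum1/k^6\pmod p$ is right — though note your mod-$p$ step needs the standard primitive-root/geometric-sum fact that $\sum_{i=1}^{p-1}i^m\equiv 0\pmod p$ when $(p-1)\nmid m$, not Fermat's little theorem alone, and this is exactly where $p\ge 11$ enters, as you observe). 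Where you diverge is in how the terms $p^3S_3$ and $p^4S_4$ are removed: the paper converts $S_3,S_4$ to power sums via Newton's identities (Lemma 2.2) and then invokes the mod-$p^4$ refinements $2R_1\equiv -pR_2$ and $2R_3\equiv -3pR_4$ (Lemmas 2.3--2.4, the most computational part of the paper), whereas you use the two additional exact evaluations $\prod_{k}(1-2p/k)=\binom{2p-1}{p-1}$ and $\prod_k(1-p/k)=1$ (both verified) and take the integer combination of the three expansions whose coefficients vanish in degrees $\le 2$, giving $p^3S_3-2p^4S_4+5p^5S_5-10p^6S_6\equiv 0\pmod{p^7}$; I checked the resulting bookkeeping ($-8p^3S_3+16p^4S_4-32p^5S_5+64p^6S_6\equiv 8p^5S_5-16p^6S_6\pmod{p^7}$) and it is correct, so the theorem indeed reduces to $p^2\mid S_5$ and $p\mid S_6$, which your Newton induction delivers. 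Your route buys a cleaner, fully self-contained proof of the theorem needing only mod-$p^2$ and mod-$p$ information about power sums, with no mod-$p^4$ pairing refinements; the paper's route, by producing explicit intermediate congruences in $R_1,R_2,R_3$ (its (2.9)--(2.16)), is what feeds directly into its Corollaries 1.3--1.5, which your shortcut does not furnish as by-products.
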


\begin{remark}
Note that the congruence \eqref{cong1.4} for $p=3$ and $p=5$ reduces 
to the identity, while for $p=7$ \eqref{cong1.4} is satisfied modulo   $7^6$.
\end{remark}

Applying a technique of Helou and Terjanian \cite{HT} 
based on Kummer type congruences, the congruence 
\eqref{cong1.4}  may be expressed in terms of  the Bernoulli numbers
as follows.
   \begin{corollary}
Let $p\ge 11$ be a  prime. Then
   \begin{eqnarray}    
{2p-1\choose p-1} &\equiv& 1-p^3B_{p^4-p^3-2}
+p^5\left(\frac{1}{2}B_{p^2-p-4}-2B_{p^4-p^3-4}\right)\nonumber\\
&&+p^6\left(\frac{2}{9}B_{p-3}^2-\frac{1}{3}B_{p-3}-\frac{1}{10}B_{p-5}\right)
\pmod{p^7}.\label{cong1.5}
  \end{eqnarray}    
\end{corollary}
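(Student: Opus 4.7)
The plan is to follow the framework of Helou and Terjanian referenced above: start from Theorem~1.1, convert each power sum appearing in \eqref{cong1.4} into Bernoulli numbers by means of Faulhaber's formula, and then bundle groups of correction terms together using Kummer-type congruences.

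First, I would apply the elementary Newton identity
\[
2\sum_{1\le i<j\le p-1}\frac{1}{ij}=\Bigl(\sum_{k=1}^{p-1}\frac{1}{k}\Bigr)^{2}-\sum_{k=1}^{p-1}\frac{1}{k^2}
\]
to rewrite \eqref{cong1.4} as
\[
{2p-1\choose p-1}\equiv 1-2pH_1+2p^2H_1^2-2p^2H_1^{(2)}\pmod{p^7},
\]
where $H_1=\sum_{k=1}^{p-1}1/k$ and $H_1^{(2)}=\sum_{k=1}^{p-1}1/k^2$. Reaching precision $p^7$ then requires knowing $H_1$ modulo $p^6$, together with $H_1^2$ and $H_1^{(2)}$ modulo $p^5$.

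To expand each of these power sums, I would combine Euler's theorem in the form $1/k^s\equiv k^{p^a(p-1)-s}\pmod{p^{a+1}}$ with the classical Faulhaber identity
\[
\sum_{k=1}^{p-1}k^m=\frac{1}{m+1}\sum_{j=0}^{m}\binom{m+1}{j}B_j\,p^{m+1-j},
\]
retaining only those terms with $m+1-j$ at most $6$ or $5$ as appropriate. The resulting expressions are finite linear combinations of Bernoulli numbers with very large (but controlled) indices, each congruent modulo a small power of $p$ to $B_{p-3}$ or $B_{p-5}$ by the basic Kummer congruence. I would next invoke the refined Kummer--Vandiver-type congruence system employed in \cite{HT} to consolidate the groups of correction terms sharing the same residue class modulo $p-1$ into a single Bernoulli number with a suitably lifted index: $B_{p^4-p^3-2}$ for the $\omega^{-2}$-component that dominates $H_1$, and $B_{p^4-p^3-4}$ together with $B_{p^2-p-4}$ for the two $\omega^{-4}$-components that arise from $H_1^{(2)}$ and from squaring $H_1$. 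The residual contributions at order $p^6$, where no further lifting is needed, supply the terms involving $B_{p-3}^2$, $B_{p-3}$ and $B_{p-5}$ in the last parenthesis of \eqref{cong1.5}.

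The principal obstacle is the numerical bookkeeping. One must check that the expansions of $H_1$, $H_1^2$ and $H_1^{(2)}$ at the stated precisions pair up with the chosen Kummer lifts so that the coefficients $-1,\tfrac{1}{2},-2,\tfrac{2}{9},-\tfrac{1}{3},-\tfrac{1}{10}$ in \eqref{cong1.5} come out exactly, and in particular that the apparent cancellations removing the potential $p$, $p^2$ and $p^4$ contributions actually take place. The calculations are in spirit the same as those in \cite[Proposition~2]{HT}; the only new ingredient is the one extra power of $p$ of precision, which is what forces the index $p^4-p^3-2$ in place of the index $p^3-p^2-2$ appearing in \eqref{congr1.3}.
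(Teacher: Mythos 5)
Your plan coincides with the paper's own proof: the paper likewise substitutes Bernoulli-number evaluations of $R_1=\sum_{k=1}^{p-1}1/k$ modulo $p^6$, of $R_1^2$ and of $R_2=\sum_{k=1}^{p-1}1/k^2$ modulo $p^5$ --- obtained exactly as you propose, from the Helou--Terjanian power-sum formula combined with Euler's theorem, von Staudt--Clausen control of denominators, and the Kummer congruences --- into the congruence of Theorem 1.1 rewritten as $1-2pR_1+2p^2(R_1^2-R_2)$. One small slip in your bookkeeping sketch: the term $\frac{p^5}{2}B_{p^2-p-4}$ arises from the order-$p^4$ part of the expansion of $R_1$ itself (squaring $R_1$ only contributes the $p^6$-level term $\frac{2}{9}p^6B_{p-3}^2$, since $p^2\mid R_1$), but this is precisely the deferred computation carried out in the paper's Lemma 3.5 and does not affect the soundness of the approach.
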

Note that reducing the moduli, and using 
the Kummer congruences.from \eqref{cong1.5} can be easily deduced 
the congruence \eqref{congr1.3}. 
\begin{corollary}$($cf. \cite[Theorem 2.4]{T}$).$ 
Let $p\ge 7$  be a prime. Then
    \begin{equation*}  
{2p-1\choose p-1}\equiv 1 -2p\sum_{k=1}^{p-1}\frac{1}{k}
-2p^2\sum_{k=1}^{p-1}\frac{1}{k^2}\equiv 1 +2p\sum_{k=1}^{p-1}\frac{1}{k}
+\frac{2p^3}{3}\sum_{k=1}^{p-1}\frac{1}{k^3}\pmod{p^6}. 
    \end{equation*} 
\end{corollary}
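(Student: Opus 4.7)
The plan is to derive Corollary 1.3 directly from Theorem 1.1 (and the remark covering $p=7$) by reducing the modulus from $p^7$ to $p^6$ and rewriting everything in terms of the power sums $H_s:=\sum_{k=1}^{p-1}k^{-s}$.

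For the first of the two equivalences I would apply Newton's identity
$$
2\sum_{1\le i<j\le p-1}\frac{1}{ij} = H_1^2 - H_2
$$
to the double sum in \eqref{cong1.4}. Wolstenholme's theorem in the form $H_1 \equiv 0 \pmod{p^2}$, noted in the introduction, forces $p^2 H_1^2 \equiv 0 \pmod{p^6}$, so Theorem 1.1 reduced modulo $p^6$ collapses to
$$
{2p-1\choose p-1}\equiv 1 - 2pH_1 - 2p^2 H_2 \pmod{p^6},
$$
which is the first half of the corollary.

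For the second equivalence the task reduces to showing
$$
2pH_1 + p^2 H_2 + \frac{p^3}{3}H_3 \equiv 0 \pmod{p^6}.
$$
I would prove this by exploiting the involution $k\leftrightarrow p-k$ on $\{1,\dots,p-1\}$. Expanding $\tfrac{1}{p-k}$ as the $p$-adic geometric series $-\sum_{j\ge 0}p^j/k^{j+1}$ and summing $\tfrac{1}{k}+\tfrac{1}{p-k}$ over $k$ gives
$$
2H_1 \equiv -pH_2 - p^2 H_3 - p^3 H_4 - p^4 H_5 \pmod{p^5},
$$
while the analogous expansion of $k^{-3}+(p-k)^{-3}$ via $(1-p/k)^{-3}=\sum_j \binom{j+2}{2}(p/k)^j$ yields
$$
2H_3 \equiv -3p H_4 - 6 p^2 H_5 \pmod{p^3}.
$$
Multiplying these identities by $p$ and by $p^3/3$ respectively, substituting into $2pH_1+p^2H_2+\tfrac{p^3}{3}H_3$ and cancelling, every term disappears modulo $p^6$ except $p^5 H_5$. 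Since $p\ge 7$ implies $p-1\nmid 5$, Fermat's little theorem gives $H_5\equiv 0\pmod p$, so $p^5 H_5\equiv 0 \pmod{p^6}$ and the proof is complete.

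The main obstacle is the bookkeeping of $p$-adic precision: each symmetric expansion must be truncated to exactly the order needed so that the $p^4 H_4$ terms cancel between the two identities, leaving only the harmless residue $p^5 H_5$. Pleasantly, no Bernoulli-number machinery is required; the argument rests only on Wolstenholme's theorem, the $k\leftrightarrow p-k$ symmetry, and the elementary congruence $H_5\equiv 0\pmod p$.
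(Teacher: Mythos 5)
Your proposal is correct, and its first half is exactly the paper's route: reduce Theorem 1.1 (with Remark 1.2 supplying the case $p=7$) modulo $p^6$, rewrite the double sum via $2\sum_{1\le i<j\le p-1}\tfrac{1}{ij}=R_1^2-R_2$ (your $H_s$ is the paper's power sum $R_s$, not its symmetric function $H_s$), and discard $2p^2R_1^2$ using Wolstenholme's $p^2\mid R_1$. Where you genuinely diverge is the second equivalence. The paper gets it in one line by recycling its internal congruence (2.14), $p^3R_3\equiv-6pR_1-3p^2R_2\pmod{p^7}$, which was proved inside Theorem 1.1 from Lemma 2.3 (with $r=5$), Lemma 2.4 and Bayat's Lemma 2.1, all stated for $p\ge 11$, and then verifies $p=7$ by a direct calculation. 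You instead re-derive the needed (weaker, mod $p^6$) relation from scratch: combining $2R_1\equiv-pR_2-p^2R_3-p^3R_4-p^4R_5\pmod{p^5}$ (this is precisely the paper's Lemma 2.3 with $r=4$) with the unconditional refinement $2R_3\equiv-3pR_4-6p^2R_5\pmod{p^3}$ of Lemma 2.4, the $p^4R_4$ terms cancel exactly and one is left with $2pR_1+p^2R_2+\tfrac{p^3}{3}R_3\equiv p^5R_5\pmod{p^6}$, which dies because $R_5\equiv 0\pmod p$ for $p\ge 7$. I checked the bookkeeping and it is right. What your variant buys is a second equivalence that is uniform in $p\ge 7$ without invoking Lemma 2.1 (which requires $n\le p-3$, hence $p\ge 11$) and without a separate numerical check at $p=7$ beyond what Remark 1.2 already provides for the first step; the paper's version is shorter only because it reuses machinery already built for Theorem 1.1 and proves the stronger mod $p^7$ relation. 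One small attribution quibble: $R_5\equiv 0\pmod p$ is the standard power-sum congruence (e.g.\ via a primitive root, or Lemma 2.1 itself), not a consequence of Fermat's little theorem alone, but the fact is true and elementary, so this does not affect correctness.
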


\begin{corollary} 
$($\cite[Theorem 3.2]{Z2}, \cite[p. 385]{M}$).$ 
 Let $p\ge 7$  be a prime. Then
  \begin{equation*}
{2p-1\choose p-1}\equiv 1 +2p\sum_{k=1}^{p-1}\frac{1}{k}
\equiv 1-p^2\sum_{k=1}^{p-1}\frac{1}{k^2}\pmod{p^5}. 
    \end{equation*} 
\end{corollary}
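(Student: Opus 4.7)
The plan is to descend from Theorem~1.1 to the modulus $p^5$ and simplify. Since \eqref{cong1.4} holds modulo $p^7$ for $p\ge 11$, modulo $p^6$ for $p=7$ by the Remark, and reduces to an identity for $p\in\{3,5\}$, for every prime $p\ge 7$ we have
\begin{equation*}
{2p-1\choose p-1}\equiv 1-2p\sum_{k=1}^{p-1}\frac{1}{k}
+4p^2\sum_{1\le i<j\le p-1}\frac{1}{ij}\pmod{p^5}.
\end{equation*}
Combining the Newton-type identity $\left(\sum_{k=1}^{p-1}\frac{1}{k}\right)^2=\sum_{k=1}^{p-1}\frac{1}{k^2}+2\sum_{1\le i<j\le p-1}\frac{1}{ij}$ with Wolstenholme's congruence $\sum_{k=1}^{p-1}\frac{1}{k}\equiv 0\pmod{p^2}$ shows that $\left(\sum\frac{1}{k}\right)^2$ is divisible by $p^4$, so $4p^2\sum_{i<j}\frac{1}{ij}\equiv -2p^2\sum\frac{1}{k^2}\pmod{p^5}$. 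Substituting yields the intermediate form
\begin{equation*}
{2p-1\choose p-1}\equiv 1-2p\sum_{k=1}^{p-1}\frac{1}{k}-2p^2\sum_{k=1}^{p-1}\frac{1}{k^2}\pmod{p^5}.
\end{equation*}

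The remaining task is to establish the auxiliary Wolstenholme-type relation
\begin{equation*}
2\sum_{k=1}^{p-1}\frac{1}{k}+p\sum_{k=1}^{p-1}\frac{1}{k^2}\equiv 0\pmod{p^4},\qquad p\ge 7.
\end{equation*}
I would obtain it via the involution $k\mapsto p-k$ on $\{1,\ldots,p-1\}$ together with the geometric expansion $\frac{1}{p-k}=-\sum_{j\ge 0}p^j/k^{j+1}$, truncated at order~$p^3$. Equating the two expressions for $\sum 1/k$ yields modulo $p^4$
\begin{equation*}
2\sum_{k=1}^{p-1}\frac{1}{k}+p\sum_{k=1}^{p-1}\frac{1}{k^2}+p^2\sum_{k=1}^{p-1}\frac{1}{k^3}+p^3\sum_{k=1}^{p-1}\frac{1}{k^4}\equiv 0,
\end{equation*}
and the last two terms are absorbed using the classical Glaisher-type congruences $\sum 1/k^3\equiv 0\pmod{p^2}$ and $\sum 1/k^4\equiv 0\pmod{p}$, both valid for $p\ge 7$.

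Multiplying the auxiliary relation by $p$ gives $p^2\sum\frac{1}{k^2}\equiv -2p\sum\frac{1}{k}\pmod{p^5}$. A first substitution into the intermediate form immediately produces ${2p-1\choose p-1}\equiv 1+2p\sum\frac{1}{k}\pmod{p^5}$; a second substitution shows $1+2p\sum\frac{1}{k}\equiv 1-p^2\sum\frac{1}{k^2}\pmod{p^5}$, so both equivalences in the corollary follow. The one slightly delicate point is the auxiliary relation: the hypothesis $p\ge 7$ is forced precisely by the higher-order vanishings of $\sum 1/k^3$ and $\sum 1/k^4$ needed to discard the $p^2$- and $p^3$-order remainders, while the rest is a purely algebraic reduction of Theorem~1.1.
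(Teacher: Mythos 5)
Your proposal is correct and follows essentially the same route as the paper: it reconstructs the intermediate congruence ${2p-1\choose p-1}\equiv 1-2pR_1-2p^2R_2\pmod{p^5}$ (the paper's Corollary~1.4, with $p=7$ handled by the stated computation) and then re-derives the paper's Lemma~2.4 relation $2R_1+pR_2\equiv 0\pmod{p^4}$ via the $k\mapsto p-k$ expansion (the paper's Lemma~2.3 with Lemma~2.1), before substituting exactly as in the paper's proof.
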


A prime $p$ is said to be a {\it Wolstenholme prime} if it 
satisfies the congruence 
$
{2p-1\choose p-1} \equiv 1 \,\,(\bmod{\,\,p^4}).$
By the  congruence (1.2) we see that  a prime $p$
is a Wolstenholme prime if and only if 
$p$ divides  the numerator of $B_{p-3}$.
The two known such primes are 16843 and 2124679, and 
recently, R.J. McIntosh and E.L. Roettger \cite{MR} reported  that these
primes are only two  Wolstenholme primes less than $10^9$.
However, by using the argument based on the prime number theorem, 
McIntosh  \cite[p. 387]{M}  conjectured that there are infinitely many 
Wolstenholme primes, and that no  prime satisfies
the congruence ${2p-1\choose p-1} \equiv 1\,\, (\bmod{\,\,p^5})$.

\begin{remark} In \cite[Corollary 1]{Me} the author proved that for any 
Wolstenholme prime $p$ holds
    \begin{eqnarray}
{2p-1\choose p-1} &\equiv& 1 -2p \sum_{k=1}^{p-1}\frac{1}{k}
-2p^2\sum_{k=1}^{p-1}\frac{1}{k^2}\nonumber\\
&\equiv& 1 +2p \sum_{k=1}^{p-1}\frac{1}{k}
+\frac{2p^3}{3}\sum_{k=1}^{p-1}\frac{1}{k^3}\pmod{p^7},\label{cong1.6}
    \end{eqnarray}  
and conjectured  \cite[Remark 1]{Me} that
 any of the previous congruences for a prime $p$ yields that $p$ is necessarily 
a Wolstenholme prime. Note that this conjecture concerning the first 
above congruence may be confirmed
by using our congruence \eqref{cong1.4}. Namely, if a prime $p$
satisfied the first congruence of \eqref{cong1.6}, then by  \eqref{cong1.4}
must be
  \begin{eqnarray}\label{cong1.7}
{2p-1\choose p-1}
&\equiv& 1 -2p \sum_{k=1}^{p-1}\frac{1}{k}-2p^2\sum_{k=1}^{p-1}\frac{1}{k^2}
\nonumber\\
&\equiv & 1 -2p\sum_{k=1}^{p-1}\frac{1}{k}+
4p^2\sum_{1\le i<j\le p-1}\frac{1}{ij}\pmod{p^7}. \label{cong1.7}
  \end{eqnarray} 
Using the identity 
   $$ 
2\sum_{1\le i<j\le p-1}\frac{1}{ij}=
\left(\sum_{k=1}^{p-1}\frac{1}{k}\right)^2-\sum_{k=1}^{p-1}\frac{1}{k^2},
  $$
the second congruence in \eqref{cong1.7} immediately reduces to
     $$
 2p^2\left(\sum_{k=1}^{p-1}\frac{1}{k}\right)^2\equiv 0\pmod{p^7},
  $$
whence it follows that
   $$
\sum_{k=1}^{p-1}\frac{1}{k}\equiv 0\pmod{p^3}.
  $$
Finally, substituting this into the first Glaisher's 
congruence in \eqref{cong1.2}, we find that
  $$
{2p-1\choose p-1}\equiv 0\pmod{p^4}.
 $$ 
Hence, $p$ must be a Wolstenholme prime, and so, our conjecture 
is confirmed related to the  first congruence of \eqref{cong1.6}. 

The situation is more complicated in relation to
the conjecture concerning the second congruence of \eqref{cong1.6}.
Then comparing this congruence and \eqref{cong1.4}, as in the previous case
we obtain
  $$
 2\sum_{k=1}^{p-1}\frac{1}{k}-p\left(\sum_{k=1}^{p-1}\frac{1}{k}\right)^2
+p\sum_{k=1}^{p-1}\frac{1}{k^2}+\frac{p^2}{3}
\sum_{k=1}^{p-1}\frac{1}{k^3}\equiv 0\pmod{p^6}.
  $$
However, from the above congruence we are unable to deduce 
that $p$ must be a Wolstenholme prime.
   \end{remark}

\begin{remark}    
It follows from Corollary 1.5 that 
 $p^3\mid \sum_{k=1}^{p-1}1/k$
and $p^2\mid \sum_{k=1}^{p-1}1/k^2$ for any Wolstenholme prime $p$.
This argument together with a technique 
applied in the proof of  Theorem 1.1
suggests the conjecture that such a prime $p$
satisfies the congruence \eqref{cong1.4} modulo $p^8$. However, 
a direct calculation shows that this is not true
for the Wolstenholme prime 16843.

 As noticed in Remark 1.2,  
the congruence \eqref{cong1.4} for $p=3$ and $p=5$ reduces to the identity. 
However, our computation via Mathematica shows that 
no prime in the range $7\le p<500000$ satisfies 
the congruence \eqref{cong1.4} with the modulus $p^8$ instead of $p^7$.  
Nevertheless, using the  heuristic argument 
for the "probability" that a prime $p$ satisfies \eqref{cong1.4} modulo 
$p^8$ is about $1/p$, 
we conjecture that there are infinitely many primes satisfying \eqref{cong1.4} 
modulo $p^8$.
\end{remark}
\section{Proof of  Theorem 1.1 and Corollaries 1.4 and 1.5} 

For the proof of  Theorem 1.1,  we will need some elementary 
auxiliary results.

For a prime $p\ge 3$  and a positive integer
$n\le p-2$  we denote 
  $$
R_n(p):=\sum_{i=1}^{p-1}\frac{1}{k^n} \,\,\,\,{\rm and}\,\,\,\,
H_n(p):=\sum_{1\le i_1<i_2<\cdots<i_n\le p-1}\frac{1}{i_1i_2\cdots i_n},
  $$
with the convention  that $H_1(p)=R_1(p)$. 
In the sequel we shall often write
throughout proofs  $R_n$ and $H_n$ instead of 
$R_n(p)$ and $H_n(p)$, respectively.

Observe that by Wolstenholme's theorem, $p^2\mid R_1(p)$ for any prime
$p\ge 5$, which can be generalized as follows.

\begin{lemma}  {\rm (\cite[Theorem 3]{B}; also 
see   \cite{ZC}  or \cite[Theorem 1.6]{Z1}).}  For any
prime $p\ge 5$  and a positive integer $n\le p-3$, we have 
 $$
R_n(p)\equiv 0  \pmod{p^2}\quad
if\,\,\, 2 \nmid n,\quad  and\quad R_n(p)\equiv 0  \pmod{p}\quad
if\,\,\, 2\mid n.
 $$ 
\end{lemma}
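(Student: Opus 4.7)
The plan is to prove the two claims separately, first establishing the uniform $\bmod\,p$ vanishing and then lifting the odd case to $\bmod\,p^2$ by a pairing argument.

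First, for any exponent $m$ with $1\le m\le p-2$, I would show that $R_m(p)\equiv 0\pmod p$ by a character-sum (primitive root) argument. Writing $k^{-m}\equiv k^{p-1-m}\pmod p$ (valid since $\gcd(k,p)=1$), the claim reduces to showing $\sum_{k=1}^{p-1} k^j\equiv 0\pmod p$ whenever $1\le j\le p-2$, i.e. $p-1\nmid j$. Taking a primitive root $g$ of $p$, this sum equals $\sum_{i=0}^{p-2}g^{ij}$, which is a geometric series with ratio $g^j\not\equiv 1\pmod p$, hence vanishes $\bmod\,p$. In particular, this already proves the even case of the lemma, where the modulus is $p$ and $n$ satisfies $2\le n\le p-3$.

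The second, sharper step treats odd $n$ with $1\le n\le p-3$. Here I would pair $k$ with $p-k$ in the sum $R_n(p)=\sum_{k=1}^{(p-1)/2}\bigl(k^{-n}+(p-k)^{-n}\bigr)$. Since $n$ is odd, $(p-k)^n=-k^n(1-p/k)^n$, so expanding the geometric series
\[
\frac{1}{(p-k)^n}=-\frac{1}{k^n}\Bigl(1+\frac{np}{k}+\binom{n+1}{2}\frac{p^2}{k^2}+\cdots\Bigr)
\]
gives $k^{-n}+(p-k)^{-n}\equiv -np\,k^{-(n+1)}\pmod{p^2}$. Summing over $k=1,\ldots,(p-1)/2$ yields
\[
R_n(p)\equiv -np\sum_{k=1}^{(p-1)/2}\frac{1}{k^{n+1}}\pmod{p^2}.
\]
Now $n+1$ is even, so the involution $k\mapsto p-k$ shows that $\sum_{k=1}^{(p-1)/2}k^{-(n+1)}\equiv \tfrac12 R_{n+1}(p)\pmod p$. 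Since $n+1\le p-2$, the first step gives $R_{n+1}(p)\equiv 0\pmod p$, and therefore $R_n(p)\equiv -\tfrac{np}{2}R_{n+1}(p)\equiv 0\pmod{p^2}$, as required.

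The bookkeeping with signs and the $p$-adic expansion of $(1-p/k)^{-n}$ is the only delicate part; I expect no genuine obstacle, since the congruence $n+1\le p-2$ (using the hypothesis $n\le p-3$) ensures that we only invoke the $\bmod\,p$ vanishing in the admissible range. The bound $p\ge 5$ guarantees that $2$ is invertible $\bmod\,p$, which is needed in the halving step.
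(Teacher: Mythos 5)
Your proposal is correct. Note, however, that the paper does not prove this lemma at all: it is quoted as a known result with references to Bayat, Zhou--Cai and Zhao, so there is no internal proof to compare against. Your argument is essentially the classical one found in those sources, and both halves check out. The reduction $R_m(p)\equiv\sum_{k=1}^{p-1}k^{p-1-m}\pmod p$ together with the primitive-root geometric series correctly gives $R_m(p)\equiv 0\pmod p$ for all $1\le m\le p-2$, which covers the even case. For odd $n$, your expansion $(p-k)^{-n}\equiv -k^{-n}\bigl(1+np/k\bigr)\pmod{p^2}$ is valid (only the linear term matters), the pairing over $1\le k\le(p-1)/2$ gives $R_n(p)\equiv -np\sum_{k\le(p-1)/2}k^{-(n+1)}\pmod{p^2}$, and since $n+1\le p-2$ is even, the half-range sum is $\tfrac12R_{n+1}(p)\pmod p$, which vanishes by the first step; the hypotheses $n\le p-3$ and $p\ge 5$ are used exactly where you say they are. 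It is worth observing that your pairing of $k$ with $p-k$ and the truncated $p$-adic expansion of $1/(p-k)^n$ is the same device the paper itself uses in its Lemmas 2.3 and 2.4, so your proof fits naturally into the paper's elementary framework and would make the exposition self-contained where the author instead relies on citation.
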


\begin{lemma}
 For any prime $p\ge 7$, 
we have 
     \begin{equation}\label{cong2.1}    
H_3(p)\equiv \frac{R_3(p)}{3} -\frac{R_1(p)R_2(p)}{2}\pmod{p^6} 
  \end{equation}
and
    \begin{equation}\label{cong2.2}
H_4(p)\equiv -\frac{R_4(p)}{4} +\frac{(R_2(p))^2}{8}\pmod{p^4}.
 \end{equation}
In particular, $p^2\mid H_3(p)$, $p\mid H_2(p)$ and $p\mid H_4(p)$.
\end{lemma}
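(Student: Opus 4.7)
The plan is to use Newton's identities (the classical relations between power sums and elementary symmetric functions) applied to the $p-1$ quantities $1/1,1/2,\ldots,1/(p-1)$, so that $R_n(p)$ and $H_n(p)$ play the roles of power sums and elementary symmetric polynomials, respectively. Since $p\ge 7$, all the denominators $2,3,4,6,8,24$ that will appear are units mod $p^k$, so fractions make sense throughout.

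First I would record Newton's identities in the forms
\begin{align*}
2H_2 &= R_1^2-R_2,\\
3H_3 &= H_2R_1-R_1R_2+R_3,\\
4H_4 &= H_3R_1-H_2R_2+R_1R_3-R_4.
\end{align*}
Substituting the formula for $H_2$ into the third identity yields the exact rational identity
$$
H_3=\frac{R_1^3}{6}-\frac{R_1R_2}{2}+\frac{R_3}{3}.
$$
By Wolstenholme's theorem, $p^2\mid R_1(p)$, so $R_1^3$ is divisible by $p^6$, and the claimed congruence \eqref{cong2.1} follows.

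Next I would iterate this idea to $H_4$. Substituting the formulas for $H_2$ and $H_3$ into the Newton identity for $4H_4$ and simplifying gives the exact rational identity
$$
H_4=\frac{R_1^4}{24}-\frac{R_1^2R_2}{4}+\frac{R_1R_3}{3}+\frac{R_2^2}{8}-\frac{R_4}{4}.
$$
Now I would use Lemma 2.1 to kill the three unwanted terms modulo $p^4$: since $p^2\mid R_1$ we have $p^8\mid R_1^4$; since $p^2\mid R_1$ and $p\mid R_2$ we have $p^5\mid R_1^2R_2$; and since $p^2\mid R_1$ and $p^2\mid R_3$ we have $p^4\mid R_1R_3$. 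What remains is exactly \eqref{cong2.2}.

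The ``in particular'' statements are immediate from the explicit formulas together with Lemma 2.1: from the $H_2$ identity, $p\mid R_2$ and $p^4\mid R_1^2$ give $p\mid H_2$; from \eqref{cong2.1}, $p^2\mid R_3$ and $p^2\mid R_1$ give $p^2\mid H_3$; and from \eqref{cong2.2}, $p^2\mid R_2^2$ and $p\mid R_4$ give $p\mid H_4$. There is essentially no obstacle here beyond organizing the algebra: the only point requiring care is confirming that in each discarded term the $p$-adic valuations from Wolstenholme's theorem and Lemma 2.1 add up to at least the exponent of the modulus. In particular, the identity for $H_4$ is needed only mod $p^4$ (rather than $p^6$), which is fortunate because the term $R_1R_3/3$ is only guaranteed to vanish mod $p^4$, not mod $p^6$; this is the one place where a mistaken expectation of a stronger congruence would fail.
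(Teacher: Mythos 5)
Your proposal is correct and follows essentially the same route as the paper: Newton's identities for the numbers $1/1,\ldots,1/(p-1)$ combined with Wolstenholme's theorem ($p^2\mid R_1$) and Lemma 2.1 to discard the higher-valuation terms; the only cosmetic difference is that you expand $4H_4$ into a fully explicit identity in $R_1,\ldots,R_4$ before reducing, whereas the paper bounds the products $H_1R_3$ and $H_3R_1$ directly. The valuation bookkeeping in each discarded term checks out, so the argument is complete.
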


\begin{proof} 
Substituting  the shuffle relation $H_2=(R_1^2-R_2)/2$
into the identity $3H_3=R_3-R_1R_2+H_2R_1$, we find that
$H_3=\frac{R_3}{3}-\frac{R_1R_2}{2}+\frac{R_1^3}{6}$.
This equality together with the fact that $p^2\mid R_1$
yields the congruence (2.1), and thus $p^2\mid H_3$.

Similarly, by  Newton's formula \cite{J}, we have the identity
  $$
4H_4=-R_4+H_1R_3-H_2R_2+H_3R_1.
  $$
Since by Lemma 2.1, $p^4\mid R_1R_3=H_1R_3$,
and since $p^2\mid H_3$ we also have $p^4\mid H_3R_1$.
Substituting this and   $H_2=(R_1^2-R_2)/2$
into the above  identity,  we obtain
  $$
4H_4\equiv -R_4-\frac{R_1^2R_2}{2}+\frac{R_2^2}{2}\pmod{p^4}.
  $$
Since by Lemma 2.1, $p^5\mid R_1^2R_2$,
we can exclude the term $R_1^2R_2/2$ in the above congruence
to obtain (2.2), and so $p\mid H_4$. This completes the proof. 
\end{proof}

\begin{lemma} For any prime $p$  and any positive
integer $r$, we have
      \begin{equation}\label{cong2.3}
2R_1\equiv-\sum_{i=1}^rp^iR_{i+1}\pmod{p^{r+1}}.
 \end{equation}
\end{lemma}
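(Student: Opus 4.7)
The plan is to exploit the symmetry $k \leftrightarrow p-k$, which is the only standard tool for reducing $R_1$ to higher power sums. Since the substitution $k\mapsto p-k$ is a bijection on $\{1,2,\ldots,p-1\}$, I would first observe
\begin{equation*}
2R_1(p) \;=\; \sum_{k=1}^{p-1}\frac{1}{k} + \sum_{k=1}^{p-1}\frac{1}{p-k}
       \;=\; \sum_{k=1}^{p-1}\left(\frac{1}{k}+\frac{1}{p-k}\right).
\end{equation*}

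Next I would produce a finite geometric expansion of $1/(p-k)$ modulo $p^{r+1}$. Since $\gcd(k,p)=1$, the element $1/(p-k)$ is a well-defined $p$-adic unit, and the identity
\begin{equation*}
(p-k)\left(-\sum_{i=0}^{r}\frac{p^{i}}{k^{i+1}}\right) \;=\; 1-\frac{p^{\,r+1}}{k^{\,r+1}}
\end{equation*}
(verified by telescoping) implies
\begin{equation*}
\frac{1}{p-k} \;\equiv\; -\sum_{i=0}^{r}\frac{p^{i}}{k^{i+1}} \pmod{p^{r+1}}.
\end{equation*}
The $i=0$ term contributes $-1/k$, which cancels the $1/k$ already present in the paired sum, leaving
\begin{equation*}
\frac{1}{k}+\frac{1}{p-k}\;\equiv\;-\sum_{i=1}^{r}\frac{p^{i}}{k^{i+1}}\pmod{p^{r+1}}.
\end{equation*}

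Finally I would sum over $k$ from $1$ to $p-1$ and interchange the two finite summations:
\begin{equation*}
2R_1(p)\;\equiv\;-\sum_{k=1}^{p-1}\sum_{i=1}^{r}\frac{p^{i}}{k^{i+1}}
\;=\;-\sum_{i=1}^{r}p^{i}\sum_{k=1}^{p-1}\frac{1}{k^{i+1}}
\;=\;-\sum_{i=1}^{r}p^{i}R_{i+1}(p)\pmod{p^{r+1}},
\end{equation*}
which is exactly \eqref{cong2.3}. The only technical point worth stating carefully is the validity of the finite geometric expansion, but since every denominator $k$ in the range $1\le k\le p-1$ is coprime to $p$, there is nothing deeper than the telescoping identity above; no recourse to Bernoulli numbers or to Wolstenholme-type input is needed here, which is consistent with the lemma's generality over all primes $p$ and all $r\ge 1$.
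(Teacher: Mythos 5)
Your proof is correct and follows essentially the same route as the paper: both pair $1/k$ with $1/(p-k)$ and use a finite geometric (telescoping) expansion of $1/(p-k)$ modulo $p^{r+1}$, then sum over $k$ and interchange the sums. The paper packages the expansion as the identity $1+\frac{p}{i}+\cdots+\frac{p^{r-1}}{i^{r-1}}=\frac{p^r-i^r}{i^{r-1}(p-i)}$ multiplied by $-p/i^2$, which is the same computation in slightly different notation.
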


\begin{proof}
Multiplying   the identity
  $$
1+\frac{p}{i}+\cdots+\frac{p^{r-1}}{i^{r-1}}=\frac{p^r-i^r}{i^{r-1}(p-i)}
 $$
by $-p/i^2$ ($1\le i\le p-1$), we obtain
  $$   
-\frac{p}{i^2}\left(1+\frac{p}{i}+\cdots+\frac{p^{r-1}}{i^{r-1}}\right)
=\frac{-p^{r+1}+pi^r}{i^{r+1}(p-i)}
\equiv \frac{p}{i(p-i)}\pmod{p^{r+1}}.
     $$   
Therefore
 $$
\left(\frac{1}{i}+\frac{1}{p-i}\right)\equiv
-\left(\frac{p}{i^2}+\frac{p^2}{i^3}+\cdots+\frac{p^{r}}{i^{r+1}}\right)
\pmod{p^{r+1}},
 $$
whence after summation over $i=1,\ldots , p-1$ we immediately obtain 
(2.3).  This concludes the proof.
\end{proof}

\begin{lemma} 
For any prime $p\ge 7$  we have
\begin{equation*}
2R_{1}(p)\equiv -pR_{2}(p)\pmod{p^4},
 \end{equation*}
and for any prime $p\ge 11$ holds
     \begin{equation*}
2R_{3}(p)\equiv -3pR_{4}(p)\pmod{p^4}.
 \end{equation*}
  \end{lemma}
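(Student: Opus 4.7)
The plan is to follow the strategy of Lemma 2.3: produce an explicit $p$-adic expansion of the power sum under consideration, and then use Lemma 2.1 to discard the higher-order tails.

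For the first congruence, I would simply apply Lemma 2.3 with $r=3$ to obtain
$$
2R_1(p)\equiv -pR_2(p)-p^2R_3(p)-p^3R_4(p)\pmod{p^4}.
$$
Since $p\ge 7$, both $n=3$ and $n=4$ lie in the range $1\le n\le p-3$, so Lemma 2.1 gives $p^2\mid R_3(p)$ (odd index) and $p\mid R_4(p)$ (even index); hence $p^2R_3(p)\equiv p^3R_4(p)\equiv 0\pmod{p^4}$ and the congruence $2R_1(p)\equiv -pR_2(p)\pmod{p^4}$ drops out.

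The second congruence requires an analogous expansion for $1/i^3+1/(p-i)^3$. The key starting point is that, already in the proof of Lemma 2.3, one has
$$
\frac{1}{p-i}\equiv -\frac{1}{i}\left(1+\frac{p}{i}+\frac{p^2}{i^2}+\frac{p^3}{i^3}\right)\pmod{p^4}.
$$
Cubing both sides and expanding $(1+p/i+p^2/i^2+p^3/i^3)^3$ modulo $p^4$ produces exactly the truncation
$$
\frac{1}{(p-i)^3}\equiv -\frac{1}{i^3}-\frac{3p}{i^4}-\frac{6p^2}{i^5}-\frac{10p^3}{i^6}\pmod{p^4},
$$
the coefficients $1,3,6,10={k+2\choose 2}$ matching the standard expansion $(1-y)^{-3}=\sum_{k\ge 0}{k+2\choose 2}y^k$. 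Adding $1/i^3$ to both sides and summing over $i=1,\dots,p-1$ then yields
$$
2R_3(p)\equiv -3pR_4(p)-6p^2R_5(p)-10p^3R_6(p)\pmod{p^4}.
$$

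The hypothesis $p\ge 11$ ensures that both $5$ and $6$ lie in $[1,p-3]$, so Lemma 2.1 gives $p^2\mid R_5(p)$ and $p\mid R_6(p)$; hence $6p^2R_5(p)$ and $10p^3R_6(p)$ are both $\equiv 0\pmod{p^4}$, leaving $2R_3(p)\equiv -3pR_4(p)\pmod{p^4}$. The only delicate step is the cube expansion, where one must carefully collect the contributions of $y$, $y^2$, and $y^3$ (with $y=p/i+p^2/i^2+p^3/i^3$) to each power $p^k/i^{k+3}$ modulo $p^4$; everything else is a routine application of Lemma 2.1.
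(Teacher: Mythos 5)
Your proof is correct. The first congruence is handled exactly as in the paper: apply Lemma 2.3 with $r=3$ and discard $p^2R_3$ and $p^3R_4$ via Lemma 2.1. For the second congruence, however, you take a genuinely different and cleaner route. The paper starts from the symmetric identity
$$
\frac{1}{k^3}+\frac{1}{(p-k)^3}=\frac{p^3-3p^2k+3pk^2}{k^3(p-k)^3},
$$
and then has to estimate three auxiliary sums $\sum_k 1/(k^3(p-k)^3)$, $\sum_k 1/(k^2(p-k)^3)$ and $\sum_k 1/(k(p-k)^3)$ separately, the last of which requires yet another identity and a substitution step before everything collapses to $2R_3\equiv -3pR_4\pmod{p^4}$. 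You instead expand $1/(p-i)$ as a truncated geometric series (the same device the paper uses to prove Lemma 2.3), cube it to get
$$
\frac{1}{(p-i)^3}\equiv -\frac{1}{i^3}-\frac{3p}{i^4}-\frac{6p^2}{i^5}-\frac{10p^3}{i^6}\pmod{p^4},
$$
and sum, obtaining $2R_3\equiv -3pR_4-6p^2R_5-10p^3R_6\pmod{p^4}$ in one stroke; Lemma 2.1 (valid for $R_5,R_6$ precisely when $p\ge 11$) then kills the tail. The cubing step is legitimate since all quantities are $p$-integral rationals, so a congruence modulo $p^4$ is preserved under multiplication, and your coefficients $1,3,6,10=\binom{k+2}{2}$ are the correct ones. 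Your argument buys brevity and makes it transparent why the natural modulus is $p^4$ and why $p\ge 11$ is needed, whereas the paper's pairing identity keeps every intermediate quantity a finite sum without invoking series truncations beyond Lemma 2.3; also, your single expansion visibly generalizes to $2R_{2m+1}\equiv -(2m+1)pR_{2m+2}\pmod{p^4}$ for other odd indices with the same proof.
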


\begin{proof} 
Note that  by Lemma 2.3,  
 $$
2R_1\equiv -pR_2-p^2R_3-p^3R_4 \pmod{p^4}.
 $$
Since by Lemma 2.1, $p^2\mid R_3$ and $p\mid R_4$ for any prime $p\ge 7$, 
the above congruence reduces to  the first congruence in our lemma.
     
Since for each $1\le k\le p-1$ 
  $$
\frac{1}{k^3}+\frac{1}{(p-k)^3}=\frac{p^3-3p^2k+3pk^2}{k^3(p-k)^3},
  $$
it follows that
    \begin{equation}\label{cong2.4}\begin{split}
2R_3 &= \sum_{k=1}^{p-1}\left(\frac{1}{k^3}+\frac{1}{(p-k)^3}\right)\\
&=p^3\sum_{k=1}^{p-1}\frac{1}{k^3(p-k)^3}-
3p^2\sum_{k=1}^{p-1}\frac{1}{k^2(p-k)^3}+3p
\sum_{k=1}^{p-1}\frac{1}{k(p-k)^3}.
 \end{split}\end{equation}
First observe that, applying Lemma 2.1, for each prime $p\ge 11$
we have 
 \begin{equation}\label{cong2.5}\begin{split}
\sum_{k=1}^{p-1}\frac{1}{k^3(p-k)^3}\equiv -\sum_{k=1}^{p-1}\frac{1}{k^6}
\equiv 0\pmod{p}.
  \end{split}\end{equation}
Further, in view of the fact that 
$1/(p-k)\equiv -(p+k)/k^2\,\,(\bmod\,\, p^2)$, 
and that for each prime $p\ge 11$, 
$p\mid R_6$ and $p^2\mid R_5$ by  Lemma 2.1,  we have
 \begin{equation}\label{cong2.6}\begin{split}
 \sum_{k=1}^{p-1}\frac{1}{k^2(p-k)^3}&=
\sum_{k=1}^{p-1}\frac{1}{(p-k)^2k^3}\\
&\equiv \sum_{k=1}^{p-1}\frac{(p+k)^2}{k^7}\quad\,\,\qquad \pmod{p^2}\\
&\equiv \sum_{k=1}^{p-1}\frac{2p}{k^6}+\sum_{k=1}^{p-1}\frac{1}{k^5}
\equiv 0\pmod{p^2}.
 \end{split}\end{equation}
Substituting (2.5) and (2.6) into (2.4), 
we get
     \begin{equation}\label{cong2.7}\begin{split}
2R_3\equiv 3p\sum_{k=1}^{p-1}\frac{1}{k(p-k)^3}\pmod{p^4}.
 \end{split}\end{equation}
Next from the identity
  $$
\frac{1}{k(p-k)^3}+\frac{1}{k^4}=
\frac{p^3}{k^4(p-k)^3}-\frac{3p^2}{k^3(p-k)^3}
+\frac{3p}{k^2(p-k)^3},
  $$
for $k=1,2,\ldots, p-1$, we obtain
 $$
\frac{1}{k(p-k)^3}+\frac{1}{k^4}\equiv 
\frac{3p^2}{k^6}
+\frac{3p}{k^2(p-k)^3}\pmod{p^3}.
  $$
After summation over $k=1,\ldots , p-1$, the above congruence
gives
  $$
\sum_{k=1}^{p-1}\frac{1}{k(p-k)^3}+R_4\equiv 
3p^2R_6 +3p\sum_{k=1}^{p-1}\frac{1}{k^2(p-k)^3}\pmod{p^3}.
  $$
Since by Lemma 2.1, $p\mid R_6$ for any prime $p\ge 11$, substituting this  
and (2.6) into the above congruence, we obtain
 $$
\sum_{k=1}^{p-1}\frac{1}{k(p-k)^3}\equiv -R_4
\pmod{p^3}.
  $$
Substituting this into (2.7), we finally obtain
  $$
2R_3\equiv -3pR_4\pmod{p^4}.
  $$
This completes the proof.
 \end{proof}

\begin{proof}[Proof of Theorem 1.1.] 
For any prime $p\ge 11$, we have
\begin{eqnarray*}   
{2p-1\choose p-1}&=&\frac{(p+1)(p+2)\cdots(p+k)\cdots (p+(p-1))}{1\cdot 2
\cdots k\cdots p-1}\\
&=& \left(\frac{p}{1}+1\right)\left(\frac{p}{2}+1\right)\cdots
\left(\frac{p}{k}+1\right)
\cdots\left(\frac{p}{p-1}+1\right)\\
&=&1+\sum_{i=1}^{p-1}\frac{p}{i}+
\sum_{1\le i_1<i_2\le p-1}\frac{p^2}{i_1i_2}+\cdots+
\sum_{1\le i_1<i_2<\cdots <i_k\le p-1}\frac{p^k}{i_1i_2\cdots i_k}\\
&&+\cdots+\frac{p^{p-1}}{(p-1)!}=1+\sum_{k=1}^{p-1}p^{k}H_k=
1+\sum_{k=1}^{6}p^{k}H_k+\sum_{k=7}^{p-1}p^{k}H_k.
 \end{eqnarray*}
By Lemmas 2.1 and 2.2, we have $R_1\equiv R_3\equiv R_5\equiv H_3\equiv
0 \,\,(\bmod \,\,p^2)$ and 
$R_2\equiv R_4\equiv R_6\equiv H_2\equiv H_4\equiv 0 \,\,(\bmod \,\,p)$
for any prime $p\ge 11$.
Since by Newton's formula,  $5H_5=R_5+\sum_{i=1}^4(-1)^iH_iR_{5-i}$
and $6H_6=-R_6-\sum_{i=1}^5(-1)^iH_iR_{6-i}$, it follows 
from the previous congruences that $p^2\mid H_5$ and $p\mid H_6$.
  Therefore, $p^7\mid \sum_{k=5}^{p-1}p^{k}H_k$ 
for any prime $p\ge 11$, and so the above expansion yields
      \begin{equation}\label{cong2.8}
{2p-1\choose p-1}\equiv 1+pH_1+p^2H_2+p^3H_3+p^4H_4
\pmod{p^7}.
    \end{equation}
Recall that $H_1=R_1$ and $H_2=(R_1^2-R_2)/2$.
The congruences from Lemma 2.2 yield
$H_3\equiv\frac{R_3}{3} -\frac{R_1R_2}{2}
\,\,(\bmod{\,\,p^4})$ and $H_4\equiv -\frac{R_4}{4} +
\frac{R_2^2}{8}\,\,(\bmod{\,\,p^3})$.
Substituting  all the previous  
expressions for $H_i$, $i=1,2,3,4$, into (2.8), 
we find that
    \begin{equation}\label{cong2.9}\begin{split}
{2p-1\choose p-1}\equiv & 1+pR_1+\frac{p^2}{2}(R_1^2-R_2)\\
&+\frac{p^3}{6}(2R_3-3R_1R_2)+\frac{p^4}{8}(R_2^2-2R_4)
\pmod{p^7}.  \end{split}
   \end{equation}
Further, by  Lemma 2.4, we have 
      \begin{equation}\label{cong2.10}
2R_{1}\equiv -pR_{2}\pmod{p^4}
 \end{equation}
and 
     \begin{equation}\label{cong2.11}
2R_{3}\equiv -3pR_{4}\pmod{p^4}.
 \end{equation}
The congruences (2.10)  and (2.11) yield $p^4R_2^2\equiv -2p^3R_1R_2
\,\,(\bmod{\,\,p^7})$
and $p^4R_4\equiv -\frac{2}{3}p^3R_3\,\,(\bmod{\,\,p^7})$,
respectively. Substituting these  congruences
into the last term on the right hand side of (2.9),
we obtain
    \begin{equation}\label{cong2.12}\begin{split}
{2p-1\choose p-1}\equiv & 1+pR_1+\frac{p^2}{2}(R_1^2-R_2)\\
&-\frac{3p^3}{4}R_1R_2+\frac{p^3}{2}R_3\pmod{p^7}.
\end{split}\end{equation}
It remains to eliminate $R_3$ from (2.12).  
Note that  by Lemma 2.3, 
 $2R_1\equiv -pR_2-p^2R_3-p^3R_4-p^4R_5-p^5R_6\,\,(\bmod{\,\,p^6})$.
Since by Lemma 2.2, $p^2\mid R_5$ and $p\mid R_6$, 
the previous congruence reduces to  
      \begin{equation}\label{cong2.13}
2R_1\equiv -pR_2-p^2R_3-p^3R_4\pmod{p^6}.
   \end{equation}
We use again the congruence (2.11) in the form 
$p^3R_4\equiv -\frac{2}{3}p^2R_3\,\,(\bmod{\,\,p^6})$,
which by inserting in (2.13) yields
$2R_1\equiv -pR_2-\frac{1}{3}p^2R_3\,\,(\bmod{\,\,p^6})$.
Multipying by $3p$, this implies  
  \begin{equation}\label{cong2.14}
p^3R_3\equiv -6pR_1-3p^2R_2\pmod{p^7}.
  \end{equation}
Substituting this into the last term of (2.13), 
we immediately get
      \begin{equation}\label{cong2.15}
{2p-1\choose p-1}\equiv 1-2pR_1-2p^2R_2+\frac{p^2}{4}R_1(2R_1-3pR_2)
\pmod{p^7}. 
\end{equation}
Now we write (2.10) as
 $$
2R_1-3pR_2\equiv 8R_1\pmod{p^4}.
  $$
Since $p^2\mid R_1$, and so $p^4\mid p^2R_1$,
multiplying the above congruence by
$\frac{1}{4}p^2R_1$, we find that 
 $$
\frac{p^2}{4}R_1(2R_1-3pR_2)\equiv 2p^2R_1^2\pmod{p^7}.
  $$
Replacing this into (2.15), we  obtain 
  \begin{equation}\label{cong2.16}
{2p-1\choose p-1}\equiv  1-2pR_1+2p^2(R_1^2-R_2)\pmod{p^7}.  
\end{equation}
which by the identity $(R_1^2-R_2)/2=H_2$
yields the desired congruence. This completes the proof.
\end{proof}

\begin{proof}[Proof of Corollary 1.4.] The first 
congruence in Corollary 1.4
for $p\ge 11$ is immediate from (2.16),  
using the fact that $p^2\mid R_1$, and so $p^6\mid p^2R_1^2$. 
Since from (2.14) we have $p^2R_2\equiv -2pR_1-\frac{p^3}{3}
\,\,(\bmod{\,\,p^6})$, inserting this into 
the first congruence in Corollary 1.4, we immediately obtain
 \begin{equation*}  
{2p-1\choose p-1}\equiv 1 +2p\sum_{k=1}^{p-1}\frac{1}{k}
+\frac{2p^3}{3}\sum_{k=1}^{p-1}\frac{1}{k^3}\pmod{p^6}, 
    \end{equation*} 
which is just the second congruence in Corollary 1.4.

A calculation shows that both congruences are also satisfied 
for $p=7$, and the proof is completed.
\end{proof}

\begin{proof}[Proof of Corollary 1.5.]
Let $p\ge 7$ be any prime. By Corollary 1.4, we have  
${2p-1\choose p-1}\equiv  1-2pR_1-2p^2R_2\,\,(\bmod{\,\,p^5})$.
Substituting  into this $-pR_{2}\equiv 2R_{1}\,\,(\bmod{\,\,p^4})$
(Lemma 2.4), we obtain 
   \begin{equation*}  
{2p-1\choose p-1}\equiv 1 +2p\sum_{k=1}^{p-1}\frac{1}{k}
\pmod{p^5}, 
    \end{equation*} 
as desired.\end{proof}

\section{Proof of  Corollary 1.3} 
As noticed in the Introduction, in the proof of Corollary
1.3, we wiil apply a method of Helou and Terjanian \cite{HT} 
based on Kummer type congruences.

\begin{lemma}
Let  $p$  be a prime, 
and let $m$ be any even positive integer.
Then the denominator $d_{m}$ of the Bernoulli number $B_{m}$
written in reduced form,  is given by
   $$
d_{m}=\prod_{p-1\mid m}p,
  $$
where the product is taken over those primes $p$ such that $p-1$ divides $m$.
\end{lemma}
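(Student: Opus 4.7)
The statement is the classical von Staudt--Clausen theorem, so my plan is to reproduce one of its standard proofs. For a fixed prime $p$, it suffices to show two things for every even positive integer $m$: (i) if $p-1\nmid m$, then $p$ does not divide the denominator of $B_m$; (ii) if $p-1\mid m$, then $p$ divides that denominator exactly once, with the sharper arithmetic information $pB_m\equiv -1\pmod{p}$. Combining (i) and (ii) over all primes yields the displayed formula for $d_m$, and the fact that the exponent is always $\le 1$ shows that $d_m$ is the squarefree product asserted.

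The main computational tool I would use is the Euler/Worpitzky expansion
\begin{equation*}
B_m \;=\; \sum_{k=0}^{m}\frac{(-1)^{k}k!}{k+1}\,S(m,k)
\;=\; \sum_{k=0}^{m}\frac{1}{k+1}\sum_{j=0}^{k}(-1)^{j}\binom{k}{j}j^{m},
\end{equation*}
where $S(m,k)$ is the Stirling number of the second kind. Since $k!\,S(m,k)$ is an integer, the only way a prime $p$ can enter the denominator of a summand is through the factor $1/(k+1)$; so $p$-adically only the indices $k$ with $p\mid k+1$ contribute. This reduces the problem to a residue calculation for those indices.

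The heart of the argument is the case $k=p-1$: applying the congruence $\binom{p-1}{j}\equiv (-1)^{j}\pmod{p}$ one finds that the inner sum reduces to $\sum_{j=1}^{p-1}j^{m}\pmod{p}$, and Fermat's little theorem evaluates this as $-1\pmod{p}$ when $p-1\mid m$ and as $0$ otherwise. Dividing by $k+1=p$ yields precisely the $-1/p$ contribution predicted by von Staudt--Clausen, and explains both (i) and the congruence part of (ii).

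The principal obstacle, as I see it, is dealing with the higher indices $k=2p-1,3p-1,\ldots$: each such term individually carries a $1/p$ factor, so one must rule out cancellation conspiracies that would create a pole of order $\ge 2$. I would handle this either by induction on $m$ using the recurrence $\sum_{k=0}^{m-1}\binom{m}{k}B_{k}=0$ (propagating the $p$-integrality of $pB_{m'}+[\,p-1\mid m'\,]$ from $m'<m$ to $m$), or by a finer analysis of $k!\,S(m,k)\pmod{p^{2}}$ based on grouping the index $j$ into residue classes modulo $p$ and expanding $j^{m}$ accordingly. Either route turns the claim into a finite verification mod $p$, after which squarefreeness of $d_m$ follows automatically.
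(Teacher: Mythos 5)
The paper's own proof is a one-liner: it quotes the von Staudt--Clausen theorem from Ireland--Rosen (namely that $B_m+\sum_{p-1\mid m}1/p$ is an integer) and reads off the denominator, which is immediate because the primes occurring are distinct. You instead set out to prove von Staudt--Clausen itself, via the Worpitzky expansion $B_m=\sum_{k=0}^m\frac{(-1)^k k!}{k+1}S(m,k)$. That is a legitimate, more self-contained route, and your treatment of the index $k=p-1$ (the reduction $\binom{p-1}{j}\equiv(-1)^j\pmod p$ followed by Fermat's little theorem, giving the contribution $-1/p$ when $p-1\mid m$ and $0$ otherwise) is exactly the classical computation.

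As written, however, the argument has a genuine gap at the very step you flag: the indices $k$ with $p\mid k+1$ and $k+1\neq p$. You only state that you ``would handle this either by induction \ldots or by a finer analysis'', without carrying out either, yet this is precisely the step on which the exactness of the denominator rests. Your description of the danger is also off: terms of $p$-adic valuation $\ge -1$ cannot ``conspire'' by cancellation to produce a pole of order $\ge 2$; the real threats are single terms with $p^2\mid k+1$ (a priori of valuation $-2$), and terms with $k+1\in\{2p,3p,\dots\}$ whose unknown multiples of $1/p$ would spoil the exact fractional part $-1/p$ needed for your claim (ii) (and the integrality in (i)). The standard, termwise fix is elementary: if $k+1$ is composite and $k+1\neq 4$, then $(k+1)\mid k!$, so the term $\frac{k!\,S(m,k)}{k+1}$ is an integer and contributes to no denominator; the one exceptional case $k+1=4$ is checked directly, since $-3+3\cdot 2^m-3^m\equiv 0\pmod 4$ for even $m$. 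With that lemma supplied, your steps (i) and (ii) combine over all primes to give $d_m=\prod_{p-1\mid m}p$ as required --- but note this is a full proof of the classical theorem, whereas the paper simply cites it.
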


\begin{proof} The assertion is an immediate 
consequence of the von Staudt-Clausen theorem 
(eg. see \cite{I},  p. 233, Theorem 3) which asserts
that  $B_{m}+\sum_{p-1\mid m}1/p$ is an integer for all even $m$, where 
the summation is over all primes $p$ such that 
$p-1$ divides  $m$.   
\end{proof} 

For a prime $p$  and a positive integer
$n$,  we denote 
  $$
R_n(p)=R_n=\sum_{k=1}^{p-1}\frac{1}{k^n}\quad
{\rm and}\quad P_n(p)=P_n=\sum_{k=1}^{p-1}k^n.
  $$
\begin{lemma}  {\rm (\cite{HT}, p. 8).}  Let $p$ be a prime greater than
$5$, and let $n,r$ be positive integers. Then
  \begin{equation}\label{cong3.1}
P_n(p)\equiv\sum_{s-{\rm ord}_p(s)\le r}
\frac{1}{s}{n\choose s-1}p^sB_{n+1-s}\pmod{p^r},
   \end{equation}
where ${\rm ord}_p(s)$ is the largest power of $p$ dividing $s$,
and the summation is taken over all integers $1\le s\le n+1$
such that $s-{\rm ord}_p(s)\le r$.
\end{lemma}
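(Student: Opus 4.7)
The plan is to start from the classical Faulhaber identity
$$
\sum_{k=0}^{N-1}k^n=\sum_{j=0}^{n}\binom{n}{j}B_j\frac{N^{n+1-j}}{n+1-j},
$$
specialize to $N=p$, and reindex by $s:=n+1-j$. Using $\binom{n}{n+1-s}=\binom{n}{s-1}$ this yields the exact rational identity
$$
P_n(p)=\sum_{s=1}^{n+1}\frac{1}{s}\binom{n}{s-1}\,p^sB_{n+1-s}.
$$
The whole problem thus reduces to showing that every ``omitted'' term, i.e.\ one with $s-\mathrm{ord}_p(s)>r$, has $p$-adic valuation at least $r$, so that dropping it leaves the congruence unchanged modulo $p^r$.

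Next, the plan is to bound $v_p$ of a generic term $T_s:=\frac{1}{s}\binom{n}{s-1}p^sB_{n+1-s}$. Since $\binom{n}{s-1}\in\mathbb{Z}$ and $v_p(p^s/s)=s-\mathrm{ord}_p(s)$, only the valuation of $B_{n+1-s}$ is nontrivial, and this is controlled by Lemma 3.1 (von Staudt--Clausen): for a positive even integer $m$ the denominator of $B_m$ is squarefree and equals $\prod_{q-1\mid m}q$, so $v_p(B_m)\ge -1$ with equality exactly when $p-1\mid m$; for odd $m\ge 3$ one has $B_m=0$; and $B_0=1$, $B_1=-1/2$ are $p$-integral for $p\ge 3$.

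Assuming $s-\mathrm{ord}_p(s)>r$, I would split into two cases. If $p-1\nmid n+1-s$, then $B_{n+1-s}$ is $p$-integral and $v_p(T_s)\ge s-\mathrm{ord}_p(s)\ge r+1$. If instead $p-1\mid n+1-s$, then the denominator of $B_{n+1-s}$ contains $p$ exactly once, so $v_p(T_s)\ge s-\mathrm{ord}_p(s)-1\ge r$. In either case $T_s\equiv 0\pmod{p^r}$ in $\mathbb{Z}_{(p)}$, and discarding all such terms yields exactly \eqref{cong3.1}.

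The main obstacle, and really the only delicate point, is the borderline case $p-1\mid n+1-s$: here the simple pole of $B_{n+1-s}$ at $p$ consumes one of the $s-\mathrm{ord}_p(s)$ powers of $p$ sitting in $p^s/s$, and the strict inequality $s-\mathrm{ord}_p(s)>r$ in the cutoff is precisely what preserves divisibility by $p^r$. Everything else is routine $p$-adic bookkeeping, and the hypothesis $p>5$ imposes no additional restriction on the argument.
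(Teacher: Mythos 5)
The paper does not actually prove this lemma; it is quoted verbatim from Helou and Terjanian \cite{HT}, so there is no internal proof to compare against. Your argument is correct and is the standard one: the Faulhaber/Bernoulli identity (with the convention $B_1=-\tfrac12$, which is exactly what makes the sum over $k=0,\dots,p-1$ equal to $P_n(p)$) gives the exact formula $P_n(p)=\sum_{s=1}^{n+1}\frac{1}{s}\binom{n}{s-1}p^s B_{n+1-s}$, and von Staudt--Clausen (the paper's Lemma 3.1) gives $v_p(B_m)\ge -1$ with the pole occurring only when $p-1\mid m$, so every omitted term, having $s-\mathrm{ord}_p(s)\ge r+1$, has $p$-adic valuation at least $r$ and may be discarded modulo $p^r$. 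The only cosmetic imprecision is the boundary case $n+1-s=0$, where $p-1\mid n+1-s$ holds trivially yet $B_0=1$ has no pole; your valuation bound is still valid there, so nothing breaks, and indeed the hypothesis $p>5$ is not needed beyond $p$ being odd.
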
 

The following result is well known as the Kummer congruences.

\begin{lemma} {\rm (\cite{I})}.  Suppose that $p\ge 3$ is a prime and 
$m$, $n$, $r$ are positive integers such that $m$ and $n$ are even,
$r\le n-1\le m-1$ and $m\not\equiv 0\,\,(\bmod{\,\, p-1})$.
If $n\equiv m\,\,(\bmod{\,\,\varphi(p^r)})$, where $\varphi(p^r)=p^{r-1}(p-1)$
is the Euler's totient function, then
 \begin{equation}\label{cong3.2}
\frac{B_m}{m}\equiv\frac{B_n}{n}\pmod{p^r}.
  \end{equation}
\end{lemma}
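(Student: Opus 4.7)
The plan is to prove Kummer's congruence via a $p$-adic interpolation identity expressing $(1 - p^{m-1})B_m/m$ as a weighted power sum whose value depends on $m$ only through its residue modulo $\varphi(p^r)$. Once such an identity is in hand, the lemma follows immediately: if $m \equiv n \pmod{\varphi(p^r)}$ then both sides agree for the exponents $m$ and $n$, and the hypothesis $r \le n - 1 \le m - 1$ ensures $(1 - p^{m-1}) \equiv (1 - p^{n-1}) \equiv 1 \pmod{p^r}$, so these unit factors may be cancelled to yield $B_m/m \equiv B_n/n \pmod{p^r}$.

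Concretely, I aim to establish the identity
\begin{equation*}
(1 - p^{m-1})\,\frac{B_m}{m} \equiv -\frac{1}{\varphi(p^r)} \sum_{\substack{1 \le a \le p^r \\ \gcd(a, p) = 1}} \omega(a)^{-m}\,\langle a \rangle^{m - 1} \pmod{p^r},
\end{equation*}
where $\omega(a) \in \Bbb Z_p$ is the Teichm\"uller representative of $a$ (the unique $(p - 1)$st root of unity congruent to $a$ modulo $p$) and $\langle a \rangle := a/\omega(a) \in 1 + p \Bbb Z_p$. The crucial feature of this formula is that $\omega(a)^{-m}$ depends only on $m \bmod (p - 1)$ and $\langle a \rangle^{m - 1}$ depends only on $m \bmod p^{r - 1}$ (by the $p$-adic binomial theorem applied to $\langle a \rangle = 1 + pb$), so the right-hand side is invariant modulo $p^r$ under any change $m \mapsto n$ with $m \equiv n \pmod{\varphi(p^r)}$. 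To prove the identity itself, I would combine Faulhaber's formula for $\sum_{a = 0}^{p^r - 1} a^{m - 1}$ with the Teichm\"uller decomposition $a = \omega(a) \langle a \rangle$ and then perform an inclusion-exclusion to remove the multiples of $p$, which is precisely what introduces the factor $(1 - p^{m-1})$ on the left.

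The main obstacle is the careful tracking of $p$-adic valuations in the Faulhaber expansion
$$\sum_{a = 0}^{p^r - 1} a^{m - 1} = \frac{1}{m} \sum_{j = 0}^{m - 1} \binom{m}{j} B_j\, p^{r(m - j)}.$$
By Lemma 3.1 each $B_j$ satisfies $v_p(B_j) \ge -1$, so for $j \le m - 2$ the corresponding term carries at least $2r - 1$ factors of $p$ and hence vanishes modulo $p^r$ after dividing through by $p^r$; the endpoint $j = m - 1$ contributes $B_{m - 1}$, which is zero for even $m \ge 4$. A second delicate point is the division by $m$: one must verify that $B_m/m$ is $p$-integral under the hypothesis $p - 1 \nmid m$, a refinement of Lemma 3.1 requiring extra care when $p \mid m$. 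A final subtlety is that the ``$1/m$-form'' of the identity (rather than the coarser $(1 - p^{m-1}) B_m \equiv (\text{power sum}) \pmod{p^r}$) genuinely requires the Teichm\"uller twist; without it one would only obtain $B_m \equiv B_n \pmod{p^r}$, which in combination with $m \equiv n \pmod{\varphi(p^r)}$ yields the congruence $B_m/m \equiv B_n/n \pmod{p^{r-1}}$, one power of $p$ short of the lemma's conclusion.
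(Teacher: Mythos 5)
The paper itself offers no proof of this lemma (it is quoted from Ireland--Rosen), so your argument must stand on its own, and it does not: the interpolation identity on which the whole plan rests is false. Take $p=5$, $r=1$, $m=2$ (a legitimate case: $2\not\equiv 0\pmod 4$ and $r\le m-1$). The left-hand side is $(1-5)\frac{B_2}{2}=-\frac13\equiv 3\pmod 5$. On the right, $\omega(a)^{-2}\langle a\rangle=a\,\omega(a)^{-3}=a\,\omega(a)\equiv a^{2}\pmod 5$, so the sum is $\equiv 1+4+9+16=30\equiv 0\pmod 5$ and the right-hand side is $\equiv 0\pmod 5$, not $3$. Since, as you yourself observe, the right-hand side is invariant under $m\mapsto m+\varphi(p^r)$ while the left-hand side obeys Kummer's congruence, the identity in fact fails for the entire residue class $m\equiv 2\pmod 4$ at $p=5$; this is not a boundary anomaly.

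The underlying reason is that a bare average of $\omega(a)^{-m}\langle a\rangle^{m-1}$ over the units modulo $p^r$ does not interpolate $(1-p^{m-1})B_m/m$. The genuine formulas carry extra structure that your sketch omits: in Washington-type explicit formulas the normalization is $1/p^{r}$ (not $1/\varphi(p^r)$, which already differs by a factor $p$ times a unit) and the summand includes the full correction series $\sum_{j\ge 0}\binom{m}{j}(p^{r}/a)^{j}B_j$, whose $j\ge 1$ terms survive the division by $p^{r}$ and contribute modulo $p^{r}$; alternatively, the elementary route behind the cited reference regularizes with an auxiliary integer $c$, proving a Voronoi-type congruence for $(c^{m}-1)\frac{B_m}{m}$ and then using $p-1\nmid m$ with $c$ a primitive root to make $c^{m}-1$ a $p$-adic unit --- it is exactly this extra ingredient that legitimizes the division by $m$ and yields the full modulus $p^{r}$. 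Your supporting computation does not supply any of this: Faulhaber applied to the untwisted sum $\sum_{a<p^{r}}a^{m-1}$ has as its relevant Bernoulli term $B_{m-1}p^{r}$, which you then note vanishes for even $m\ge 4$, so the sketch never produces $B_m/m$ at all; and once the character $\omega^{-m}$ is inserted the expansion involves generalized Bernoulli numbers rather than the ordinary $B_j$, a step you never carry out. (The $p$-integrality of $B_m/m$ for $p-1\nmid m$, which you flag, is likewise left unproved, but that is secondary to the false central identity.)
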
 

The following congruences are also due to Kummer.

\begin{lemma} {\rm (\cite{K}; also see \cite{HT}, p. 20)}.  Let $p\ge 3$ 
be a prime and let $m$, $r$ be positive integers such that $m$ is even, 
$r\le m-1$ and $m\not\equiv 0\,\,(\bmod{\,\, p-1})$. Then
 \begin{equation}\label{cong3.3}
\sum_{k=0}^{r}(-1)^k{m\choose k}\frac{B_{m+k(p-1)}}{m+k(p-1)}\equiv 0
\pmod{p^r}.
 \end{equation}
\end{lemma}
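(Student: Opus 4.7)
The plan is to descend from the analogous iterated Kummer congruence for power sums to the Bernoulli--number version via Faulhaber's formula (Lemma~3.2). The power-sum statement is elementary: Fermat's little theorem lets one write $j^{p-1}=1+p\alpha_j$ with $\alpha_j\in\mathbf{Z}_p$ for each $1\le j\le p-1$, and then the binomial theorem gives
$$
\sum_{k=0}^{r}(-1)^k\binom{r}{k}j^{m+k(p-1)}=j^m\bigl(1-(1+p\alpha_j)\bigr)^r=(-p)^r\alpha_j^r\,j^m,
$$
which is visibly divisible by $p^r$. Summing over $j=1,\dots,p-1$ yields
$$
\sum_{k=0}^{r}(-1)^k\binom{r}{k}P_{m+k(p-1)}(p)\equiv 0\pmod{p^r}.
$$

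Next I would feed this into Lemma~3.2. In the expansion of each $P_{m+k(p-1)}(p)$ the $s=1$ term is precisely $p\,B_{m+k(p-1)}$, while every other term carries a prefactor $p^s$ with $s\ge 2$. The hypothesis $m\not\equiv 0\pmod{p-1}$ together with Lemma~3.1 ensures that each $B_{m+k(p-1)}$ is $p$-integral and that $m+k(p-1)$ is a $p$-adic unit, so $B_{m+k(p-1)}/(m+k(p-1))\in\mathbf{Z}_{(p)}$. Dividing the power-sum congruence by $p$ and then by $m+k(p-1)$ inside each summand (using a $p$-adic geometric-series expansion of $1/(m+k(p-1))$ about $1/m$) converts the leading contribution into the desired alternating sum of the quotients $B_{n_k}/n_k$, with $n_k:=m+k(p-1)$.

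The main obstacle is then to show that the higher Faulhaber terms ($s\ge 2$), once hit by the alternating difference $\sum(-1)^k\binom{r}{k}$, contribute nothing modulo $p^{r+1}$ before the final division by $p$. The guiding principle is that the finite-difference operator $\Delta^{r}\colon f(k)\mapsto\sum_{k=0}^{r}(-1)^{r-k}\binom{r}{k}f(k)$ annihilates polynomials in $k$ of degree $<r$, and for each fixed $s\ge 2$ the expression $\tfrac{1}{s}\binom{n_k}{s-1}B_{n_k+1-s}$ is, as a function of $k$, a polynomial in $n_k$ times a Bernoulli number whose index varies in arithmetic progression of step $p-1$. The variation of $B_{n_k+1-s}$ with $k$ is controlled by the already-established lower-order Kummer congruences of Lemma~3.3, which is why the whole argument is naturally organized as an induction on $r$ with base case $r=1$ supplied directly by Lemma~3.3. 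The most delicate case is $s=p$, where $s-\operatorname{ord}_p(s)=p-1$ keeps a term that might otherwise seem negligible; checking that even this term fits into the difference calculus, and that no spurious denominators appear from the factor $1/s$, is the subtlest bookkeeping point of the argument.
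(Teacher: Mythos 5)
The paper offers no proof of this lemma at all --- it is quoted from Kummer and from Helou--Terjanian (and, as an aside, the binomial coefficient in the displayed statement should surely be $\binom{r}{k}$ rather than $\binom{m}{k}$, which is the version your power-sum identity is aimed at) --- so your proposal must stand on its own, and as written it has a genuine gap. The elementary first step is fine: $\sum_{k=0}^{r}(-1)^k\binom{r}{k}j^{m+k(p-1)}=j^m(1-j^{p-1})^r$ is divisible by $p^r$, hence so is $\sum_{k}(-1)^k\binom{r}{k}P_{n_k}(p)$ with $n_k=m+k(p-1)$. But the passage from this to the Bernoulli statement is where the entire content of Kummer's congruence lies, and your sketch does not carry it out. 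Two concrete problems. First, the power-sum congruence holds modulo $p^r$ and in general not modulo $p^{r+1}$ (for $p=5$, $m=2$, $r=1$ one gets $\sum_j j^2(1-j^4)=-4860$, exactly divisible by $5$), so even granting that all the $s\ge 2$ Faulhaber terms drop out modulo $p^{r+1}$ after differencing, dividing by $p$ yields only $\sum_k(-1)^k\binom{r}{k}B_{n_k}\equiv 0\pmod{p^{r-1}}$ --- one power short, and about $B_{n_k}$ rather than $B_{n_k}/n_k$. Second, the division by $n_k$ is not a harmless normalization to be restored afterwards by a ``geometric-series expansion of $1/(m+k(p-1))$'': the undivided statement $\sum_k(-1)^k\binom{r}{k}B_{n_k}\equiv 0\pmod{p^{r}}$ is simply false (again $p=5$, $m=2$, $r=1$: $B_2-B_6=\tfrac16-\tfrac1{42}=\tfrac17\not\equiv 0\pmod 5$, whereas $\tfrac{B_2}{2}-\tfrac{B_6}{6}=\tfrac{5}{63}$), so that unexecuted step is precisely where the missing power of $p$ would have to be produced.

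The part you yourself label ``the main obstacle'' --- that the $s\ge 2$ terms of the expansion of $P_{n_k}(p)$, once hit by the alternating difference, vanish to the required order --- is likewise only a guiding principle, not an argument: for some $s$ the index $n_k+1-s$ is divisible by $p-1$, so $B_{n_k+1-s}$ is not even $p$-integral (von Staudt--Clausen, Lemma 3.1), the factor $1/s$ with $p\mid s$ costs a further power of $p$, and the proposed induction on $r$ is never formulated (what is the inductive statement, and how are the products $\binom{n_k}{s-1}B_{n_k+1-s}$ differenced?). If you want an argument in this spirit that actually closes, the standard route is to difference not $P_{n}(p)$ but the regularized quantity $(1-c^{n})(1-p^{\,n-1})B_n/n$ coming from a $p$-adic measure on $\mathbf{Z}_p^{\times}$: its $r$-th finite difference is divisible by $p^r$ because the integrand becomes $x^{m-1}(1-x^{p-1})^r$, the hypothesis $r\le m-1$ is used exactly to discard the $p^{\,n_k-1}B_{n_k}/n_k$ correction terms, and $m\not\equiv 0\pmod{p-1}$ makes $1-c^{n_k}$ a $p$-adic unit. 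As it stands, your proposal proves only the easy power-sum analogue, not the lemma.
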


\begin{lemma} For any prime $p\ge 11$, we have
\begin{itemize}
\item[(i)] $\displaystyle R_1(p)\equiv -\frac{p^2}{2}B_{p^4-p^3-2}
-\frac{p^4}{4}B_{p^2-p-4}+\frac{p^5}{6}B_{p-3}+\frac{p^5}{20}B_{p-5}
\,\,(\bmod{\,\, p^6}).\qquad$
\item[(ii)] $\displaystyle R_1^2(p)\equiv \frac{p^4}{9}B_{p-3}^2
\,\,(\bmod{\,\, p^5}).\qquad$
\item[(iii)] $\displaystyle R_2(p)\equiv 
pB_{p^4-p^3-2}+p^3B_{p^4-p^3-4}\,\,(\bmod{\,\, p^5}).$ 
\end{itemize}
  \end{lemma}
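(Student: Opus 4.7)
The unifying idea is to pass from the harmonic sums $R_n(p)$ to the power sums $P_m(p)$ covered by Lemma 3.2, and then compress the resulting Bernoulli indices via Kummer (Lemma 3.3). The bridge is
\[
R_n(p)\equiv P_{\varphi(p^r)-n}(p)\pmod{p^r},
\]
which holds because $k^{\varphi(p^r)}\equiv 1\pmod{p^r}$ for every $k$ coprime to $p$, so $k^{-n}\equiv k^{\varphi(p^r)-n}\pmod{p^r}$; summing over $k=1,\ldots,p-1$ gives the claim. Each of (i), (ii), (iii) is obtained from this scheme with a suitably chosen $r$.

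Starting with (iii), I would take $r=5$ and expand $P_{\varphi(p^5)-2}(p)$ modulo $p^5$ using Lemma 3.2. For $p\ge 11$ the constraint $s-\mathrm{ord}_p(s)\le 5$ restricts $s$ to $\{1,\ldots,5\}$; the entries with $s$ even vanish since $B_{n+1-s}$ has odd index $\ge 3$, and the $s=4,5$ entries are annihilated modulo $p^5$ by the $p$-integrality of the relevant $B_m$ (guaranteed because $p-1\nmid m$ for $p\ge 11$). Only the $s=1$ and $s=3$ terms survive; the residual binomial coefficient $\binom{n}{s-1}/s$ simplifies via $n\equiv -2\pmod{p^2}$, and applying Lemma 3.3 to the two surviving Bernoulli numbers lowers their indices to those written in (iii).

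For (i) I would first route $R_1$ through $R_2$ and $R_3$: Lemma 2.3 with $r=5$ together with the divisibilities $p^2\mid R_5$, $p\mid R_6$ from Lemma 2.1 gives $2R_1\equiv -pR_2-p^2R_3-p^3R_4\pmod{p^6}$, and the relation $2R_3\equiv -3pR_4\pmod{p^4}$ from Lemma 2.4 eliminates $R_4$ to yield
\[
R_1\equiv -\tfrac{p}{2}R_2-\tfrac{p^2}{6}R_3\pmod{p^6}.
\]
Into this I would plug part (iii) for $R_2$ and an analogous Lemma 3.2 expansion of $R_3\equiv P_{\varphi(p^4)-3}(p)\pmod{p^4}$, in which only the $s=2$ term survives and gives $R_3\equiv -\tfrac{3p^2}{2}B_{p^4-p^3-4}\pmod{p^4}$. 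The two $B_{p^4-p^3-4}$ contributions combine into $-\tfrac{p^4}{4}B_{p^4-p^3-4}$, and the remaining Kummer reductions --- modulo $p^2$ to turn $B_{p^4-p^3-4}$ into $B_{p^2-p-4}$, and modulo $p$ to pull out the $\tfrac{p^5}{20}B_{p-5}$ correction from that chain together with the $\tfrac{p^5}{6}B_{p-3}$ correction from the $B_{p^4-p^3-2}$ chain --- produce the statement of (i).

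Part (ii) is the shortest: Lemma 2.4 gives $R_1\equiv -\tfrac{p}{2}R_2\pmod{p^4}$, and squaring (with $p\mid R_2$ annihilating the $p^5$ cross term) yields $R_1^2\equiv \tfrac{p^2}{4}R_2^2\pmod{p^5}$. Truncating (iii) to $R_2\equiv pB_{p^4-p^3-2}\pmod{p^3}$ gives $R_2^2\equiv p^2 B_{p^4-p^3-2}^2\pmod{p^4}$, so $R_1^2\equiv \tfrac{p^4}{4}B_{p^4-p^3-2}^2\pmod{p^5}$; a final Kummer reduction $B_{p^4-p^3-2}\equiv \tfrac{2}{3}B_{p-3}\pmod{p}$ delivers (ii). The hardest part of the whole argument is the bookkeeping in (i): each replacement $B_m\to B_{m'}$ via Lemma 3.3 carries a unit factor $m/m'$ whose $p$-adic expansion must be tracked one order past the leading term, and it is precisely those sub-leading coefficients that generate the delicate $\tfrac{p^5}{6}B_{p-3}$ and $\tfrac{p^5}{20}B_{p-5}$ summands.
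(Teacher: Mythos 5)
Your overall scheme (the Euler bridge $R_n\equiv P_{\varphi(p^r)-n}\pmod{p^r}$, the truncated expansion of Lemma 3.2, then Kummer reductions) is the paper's scheme, and your part (ii) is fine. But there is a genuine gap in your treatment of (iii), and it propagates into (i). Lemma 3.3 does not let you simply ``lower the index'': it gives $B_m\equiv\frac{m}{n}B_n\pmod{p^r}$, and the unit factor $\frac{m}{n}$ matters here. In your (iii) the $s=1$ term is $pB_{p^5-p^4-2}$, so $B_{p^5-p^4-2}$ must be reduced modulo $p^4$; with $m=p^5-p^4-2$, $n=p^4-p^3-2$ one has $\frac{m}{n}\equiv 1-\frac{p^3}{2}\pmod{p^4}$, and the subleading piece contributes $-\frac{p^4}{2}B_{p^4-p^3-2}\equiv-\frac{p^4}{3}B_{p-3}\pmod{p^5}$, which is not negligible modulo $p^5$ (it vanishes only when $p\mid B_{p-3}$, i.e.\ for Wolstenholme primes). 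So the step ``applying Lemma 3.3 \dots lowers their indices to those written in (iii)'' is exactly the step that fails: carried out exactly, your route gives $R_2\equiv pB_{p^4-p^3-2}+p^3B_{p^4-p^3-4}-\frac{p^4}{3}B_{p-3}\pmod{p^5}$, not the displayed congruence. Note that the paper's own argument for (iii) is structurally different at this point: it expands $P_{p^4-p^3-2}$ itself (working at level $\varphi(p^4)-2$), so the leading Bernoulli number already carries the index $p^4-p^3-2$ and no such Kummer reduction of the $s=1$ term is performed.

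The same omission breaks your derivation of (i). After substituting your version of (iii) into $R_1\equiv-\frac{p}{2}R_2-\frac{p^2}{6}R_3\pmod{p^6}$ (that reduction, and your evaluation $R_3\equiv-\frac{3p^2}{2}B_{p^4-p^3-4}\pmod{p^4}$, are correct), the term $-\frac{p^2}{2}B_{p^4-p^3-2}$ appears already at its final index and undergoes no further index-lowering, so there is no ``$\frac{p^5}{6}B_{p-3}$ correction from the $B_{p^4-p^3-2}$ chain'' to be had; that summand can only come from the $p^4$-level piece of $R_2$ that your (iii) discarded (indeed $-\frac{p}{2}\cdot\bigl(-\frac{p^4}{3}B_{p-3}\bigr)=\frac{p^5}{6}B_{p-3}$). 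As written, your route therefore outputs (i) without the $\frac{p^5}{6}B_{p-3}$ term. To repair it you must carry the full mod-$p^5$ information on $R_2$, unit factors included, through the substitution; the paper instead proves (i) directly, expanding $R_1\equiv P_{\varphi(p^6)-1}\pmod{p^6}$ via Lemma 3.2 and tracking the factors $\frac{m}{n}$ in each Kummer reduction, which is precisely where its $\frac{p^5}{6}B_{p-3}$ and $\frac{p^5}{20}B_{p-5}$ terms arise.
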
 

\begin{proof}
If $s$ is a positive integer such that 
${\rm ord}_p(s)=e\ge 1$, then for $p\ge 11$ holds $s-e\ge p^e-e\ge 10$.
This shows that the condition $s-{\rm ord}_p(s)\le 6$
implies that ${\rm ord}_p(s)=0$, and thus, for such a $s$ must be $s\le 6$.
Therefore
 \begin{equation}\label{cong3.4}
P_n(p)\equiv\sum_{s=1}^6\frac{1}{s}{n\choose s-1}
p^sB_{n+1-s}\pmod{p^6}\quad {\rm for}\quad n=1,2,\ldots. 
 \end{equation}
By Euler's theorem \cite{HW}, for $1\le k\le p-1$, and positive integers
$n,e$ we have $1/k^{\varphi(p^e)-n}\equiv k^{n}\,\,(\bmod{\,\, p^e})$,
where   $\varphi(p^e)=p^{e-1}(p-1)$ is the Euler's totient function. 
Hence, $R_{\varphi(p^e)-n}(p)\equiv P_n(p)\,\,(\bmod{\,\, p^e})$.
In particular, if $n=\varphi(p^6)-1=p^5(p-1)-1$, then 
by Lemma 3.1, $p^6\mid p^6B_{p^5(p-1)-6}$ for each prime $p\ge 11$.
Therefore, using the fact that 
$B_{p^5(p-1)-1}=B_{p^5(p-1)-3}=B_{p^5(p-1)-5}=0$, (12) yields 
    \begin{eqnarray*}   
R_1(p)&\equiv & P_{p^5(p-1)-1}(p)\equiv \frac{1}{2}(p^5(p-1)-1)p^2
B_{p^5(p-1)-2}\\
&& +\frac{1}{4}\frac{(p^5(p-1)-1)(p^5(p-1)-2)(p^5(p-1)-3)}{6}p^4
B_{p^5(p-1)-4}\pmod{p^6},
       \end{eqnarray*}  
whence we have
  \begin{equation}\label{cong3.5}
R_1(p)\equiv -\frac{p^2}{2}B_{p^6-p^5-2}
-\frac{p^4}{4}B_{p^6-p^5-4}\pmod{p^6}.
  \end{equation}
By the Kummer congruences \eqref{cong3.2} from Lemma 3.3, we have
 $$
B_{p^6-p^5-2}\equiv
\frac{p^6-p^5-2}{p^4-p^3-2}B_{p^4-p^3-2}\equiv
\frac{2B_{p^4-p^3-2}}{p^3+2}\equiv
\left(1-\frac{p^3}{2}\right)B_{p^4-p^3-2}\pmod{p^4}.
 $$
Substituting this into \eqref{cong3.5}, we obtain
 \begin{equation}\label{cong3.6}
R_1(p)\equiv -\frac{p^2}{2}B_{p^4-p^3-2}+\frac{p^5}{4}B_{p^4-p^3-2}
-\frac{p^4}{4}B_{p^6-p^5-4}\pmod{p^6}.
  \end{equation}
Similarly, we have 
 $$
B_{p^4-p^3-2}\equiv
\frac{p^4-p^3-2}{p-3}B_{p-3}\equiv \frac{2}{3}B_{p-3}\pmod{p}
 $$
and
 $$
B_{p^6-p^5-4}\equiv
\frac{p^6-p^5-4}{p^2-p-4}B_{p^2-p-4}\equiv \frac{4B_{p^2-p-4}}{p+4}
\equiv \left(1-\frac{p}{4}\right)B_{p^2-p-4}\pmod{p^2}.
 $$
Substituting the above two congruences into \eqref{cong3.6}, we get
 \begin{equation}\label{cong3.7}
R_1(p)\equiv -\frac{p^2}{2}B_{p^4-p^3-2}+\frac{p^5}{6}B_{p-3}
-\frac{p^4}{4}B_{p^2-p-4}+\frac{p^5}{16}B_{p^2-p-4}\pmod{p^6}.
  \end{equation}
Finally, since
 $$
B_{p^2-p-4}\equiv\frac{p^2-p-4}{p-5}B_{p-5}\equiv
\frac{4}{5}B_{p-5}\pmod{p},
 $$
the substitution of the above 
congruence  into \eqref{cong3.7} immediately gives the congruence (i).

Further, \eqref{cong3.7} immediately gives
  \begin{equation}\label{cong3.8}
R_1(p)^2\equiv \frac{p^4}{4}B_{p^4-p^3-2}^2\pmod{p^5}.
  \end{equation}
Again by the Kummer congruences \eqref{cong3.2} from Lemma 3.3, we have
 $$
B_{p^4-p^3-2}\equiv\frac{p^4-p^3-2}{p-3}B_{p-3}\equiv
\frac{2}{3}B_{p-3}\pmod{p}.
 $$
Substituting this into \eqref{cong3.7}, we immediately obtain
the congruence (ii).

In order to prove the congruence (iii),
note that if $n-3\not\equiv 0\,\,(\bmod{\,\, p-1})$,
then by Lemma 3.1, for even $n\ge 6$ holds $p^5\mid p^5B_{n-4}$,
and we known that $B_{n-1}=B_{n-3}=0$ for such a $n$.
Therefore, reducing the modulus in \eqref{cong3.4} to  $p^5$, 
and using the same argument as in the begin of the proof of (i), for 
all even $n\ge 2$  holds
 \begin{equation}\label{cong3.9}
P_n(p)\equiv pB_n+\frac{p^3}{6}n(n-1)B_{n-2}\pmod{p^5}.
 \end{equation}
In particular, for $n=p^4-p^3-2$  
and using $P_{\varphi(p^4)-2}(p)\equiv R_2(p)\,\,(\bmod{\,\, p^4})$,
\eqref{cong3.9} reduces to
     \begin{equation}\label{cong3.10}
R_2(p)\equiv P_{p^4-p^3-2}(p)\equiv pB_{p^4-p^3-2}+
p^3B_{p^4-p^3-4}\pmod{p^5}.
   \end{equation}
This completes the proof.
  \end{proof}
\begin{proof}[Proof of Corollary 1.3]
The congruence of Corollary 1.3  follows directly
by substituting the congruences (i), (ii) and (iii) of Lemma 3.5 into 
the congruence \eqref{cong1.4} of Theorem 1.1.
  \end{proof}

\end{document}